\renewcommand{\PrintDOI}[1]{\doi{#1}}
\newtheorem{theorem}{Theorem}
\newtheorem{lemma}[theorem]{Lemma}
\newtheorem{proposition}[theorem]{Proposition}
\newtheorem{corollary}[theorem]{Corollary}
\newtheorem{conj}[theorem]{Conjecture}
\newtheorem{problem}[theorem]{Problem}
\theoremstyle{definition}
\newtheorem{definition}[theorem]{Definition}
\newtheorem{remark}[theorem]{Remark}
\let\polishlcross=\l
\def\l{\ifmmode\ell\else\polishlcross\fi}
\def\moverlay{\mathpalette\mov@rlay}
\def\mov@rlay#1#2{\leavevmode\vtop{    \baselineskip\z@skip\lineskiplimit-\maxdimen%
    \ialign{\hfil$\m@th#1##$\hfil\cr#2\crcr}}}
\newcommand{\charfusion}[3][\mathord]{
    #1{\ifx#1\mathop\vphantom{#2}\fi
        \mathpalette\mov@rlay{#2\cr#3}
      }
    \ifx#1\mathop\expandafter\displaylimits\fi}
\DeclareFontFamily{U}  {MnSymbolC}{}
\DeclareSymbolFont{MnSyC}         {U}  {MnSymbolC}{m}{n}
\DeclareFontShape{U}{MnSymbolC}{m}{n}{%
    <-6>  MnSymbolC5
   <6-7>  MnSymbolC6
   <7-8>  MnSymbolC7
   <8-9>  MnSymbolC8
   <9-10> MnSymbolC9
  <10-12> MnSymbolC10
  <12->   MnSymbolC12}{}
\DeclareMathSymbol{\powerset}{\mathord}{MnSyC}{180}
\def\namedlabel#1#2{\begingroup
    #2%
    \def\@currentlabel{#2}%
    \phantomsection\label{#1}\endgroup
}
\numberwithin{theorem}{section}
\setlist[itemize]{leftmargin=1cm}
\setlist[enumerate]{leftmargin=1cm}
\renewcommand{\leq}{\leqslant}
\renewcommand{\geq}{\geqslant}
\renewcommand{\to}{\rightarrow}
\let\epsilon\varepsilon%
\def\cD{\mathcal{D}}
\def\cG{\mathcal{G}}
\def\cH{\mathcal{H}}
\def\cP{\mathcal{P}}
\def\PP{\mathbb{P}}
\def\1{\mathbbm{1}}
\def\<{\langle}
\def\>{\rangle}
\let\theta=\vartheta%
\let\rho=\varrho%
\let\phi=\varphi%
\DeclareMathOperator{\smp}{smp}
\DeclareMathOperator{\Sh}{Sh}
\begin{document}
\onehalfspace%
\date{\today, \currenttime}
\footskip=28pt

\title{On the Ramsey numbers of daisies I}

\author{Pavel Pudl\'{a}k}

\author{Vojtech R\"{o}dl}

\author{Marcelo Sales}

\thanks{The first author was supported by the project EPAC, funded by the Grant Agency of the Czech Republic under the grant agreement no. 19-27871X, and the institute grant RVO: 67985840. The second and third author were supported by NSF grant DMS 1764385. The second author was also supported by NSF grant DMS 2300347 and the third author by US Air Force grant FA9550-23-1-0298.}

\address{Institute of Mathematics, CAS, 
    Prague, Czech Republic}
\email{pudlak@math.cas.cz}

\address{Department of Mathematics, Emory University, 
    Atlanta, GA, USA}
\email{vrodl@emory.edu}

\address{Department of Mathematics, University of California, 
    Irvine, CA, USA}
\email{mtsales@uci.edu}

\begin{abstract}
Daisies are a special type of hypergraph introduced by Bollob\'{a}s,
Leader and Malvenuto in~\cite{BLM}. An $r$-daisy determined by a pair of disjoint sets
$K$ and $M$ is the $(r+|K|)$-uniform hypergraph $\{K\cup P:\: P\in M^{(r)}\}$. 
The authors of~\cite{BLM} studied Tur\'{a}n type
density problems for daisies. This paper deals with Ramsey numbers of
Daisies, which are natural generalizations of classical Ramsey
numbers. We discuss upper and lower bounds for the Ramsey number of $r$-daisies and also for special cases where the size of the kernel is bounded.
\end{abstract}

\maketitle

\section{introduction}\label{sec:intro}

All hypergraphs and set systems will be defined on $[n]$, where $[n]$ denotes the set $\{1, 2,\ldots ,n\}$. For a set $X$, we denote by $X^{(r)}$ the set of all subsets of $X$ of size $r$. When needed, we will use the natural ordering of $[n]$.

\begin{definition}\label{def:daisies}
For a pair of disjoint sets $K$ and $M$ and an integer $r\geq 2$ the \emph{daisy} $\cD_r(K,M)$ is the hypergraph defined as follows
\[
\cD_r(K,M)=\{K\cup P:\: P\in M^{(r)}\},
\]
i.e., an edge $X$ of $\cD_r(K,M)$ is the union of the \emph{kernel} $K$ and a \emph{petal} $P$. The set $M$ is called the \emph{universe of petals} of $\cD_r(K,M)$. When the pair $K, M$ have sizes $|K|=k$ and $|M|=m$ we say that the $(r+k)$-graph $\cD_r(K,M)$ is a \emph{$(r,m,k)$-daisy}. When the size of the kernel is not specified, we talk about an \emph{$(r,m)$-daisy} 
\end{definition}

Daisies were introduced by Bollob\'{a}s, Leader and Malvenuto
in~\cite{BLM}. They asked how many edges an $(r+k)$-graph can have
without containing an $(r,m,k)$-daisy. These problems are open
except for a few special cases. For instance, it is not known how
large the density of a $3$-graph without a $(2,4,1)$-daisy can be. In a recent breakthrough, Ellis, King, Ivan and Leader \cites{EIL24,EK23} proved lower bounds for some families of daisies. In particular, their result disproves a conjecture in \cite{BLM} that the Tur\'{a}n density of an $(r,m,k)$-daisy approaches $0$ for fixed $r, m$ and $k \rightarrow \infty$. In this paper we will be interested in Ramsey numbers of daisies. While in \cite{BLM} the main interest of the authors was in daisies with a kernel of fixed size, we will focus on daisies with unbounded kernel.

\begin{definition}
For integers $\ell\geq 2$ and $m\geq r \geq 2$, we define 
the \emph{Ramsey number of daisies} $D_r(m;\ell)$ as the minimum integer $n$ with the property that any coloring $\phi: \cP([n])\rightarrow [\ell]$ of the subsets of
$[n]$ by $\ell$ colors yields a monochromatic copy of an
$(r,m)$-daisy.\footnote{The colors of sets smaller than $r$ play no
role, but it will be simpler to talk about colorings of all sets.}
\end{definition}
Note that in our definition of $D_r$ we do not specify the size of the kernel of
the monochromatic daisy. In Section~\ref{sec:fixed},
we will also consider \emph{Ramsey numbers $D_r(m,k;\ell)$ of daisies with kernel of fixed size $k$} defined analogously.

Our motivation for studying daisies comes from theoretical computer science, specifically, a problem in the area of randomness extractors. In~\cite{CS15} Cohen and Shinkar studied the concept of the bit-fixing extractor (more precisely, extractor for bit-fixing sources). Defining this concept exactly would take us too far afield and is not the focus of the paper. However, this area of research is connected with Ramsey theory (e.g, \cite{PR}) and moreover with daisies (see Remark \ref{rmk:superdaisy}). To estimate the Ramsey number of daisies seems to be
a simpler problem that could shed light on the problems about
bit-fixing extractors. For the reader interested in randomness extractors, we recommend Shaltiel's survey \cite{S11}.


The problem of getting better estimates for the Ramsey number of
daisies could be of independent interest as well, since these numbers can be viewed as generalizations of Ramsey numbers for hypergraphs and there is a huge gap between currently available upper and lower bounds. The aim of this paper is to popularize this question. To this end we will prove some simple results that show
connections between Ramsey numbers of daisies and the standard Ramsey
numbers of complete hypergraphs. 


The paper is organized as follows. In the following section we
present our results and some definitions. In Section \ref{sec:randomlb} we prove Proposition \ref{prop:randomlb}, Section \ref{sec:ramseyvsdaisy} deals with Theorem \ref{prop:ramseyvsdaisy}, while in Section \ref{sec:superdaisy} we give a proof of Proposition \ref{prop:superdaisy}. In Section \ref{sec:fixed} we discuss some related problems for daisies of fixed kernel size or specified kernel position with respect to the underlying order of $[n]$. Finally, Section \ref{sec:remarks} is devoted to open problems and final remarks. 

\section{Definitions and results}\label{sec:def}


\subsection{Daisies of unrestricted kernel}

For integers $\ell\geq 2$ and $m\geq r\geq 2$, let $R_r(m;\ell)$ be the standard Ramsey number for $r$-graphs, i.e., the minimum number $n$ such that for every coloring of $[n]^{(r)}$ by $\ell$ colors, there exists a monochromatic complete $r$-graph on $m$ vertices $K^{(r)}_m$. Since the complete $r$-graph $K_m^{(r)}$ is an $(r,m)$-daisy with
empty kernel, we have the basic bound
\begin{align}\label{eq:eq1}
D_r(m;\ell)\leq R_r(m;\ell).
\end{align}

Setting $R_r(m)=R_r(m;2)$ and $D_r(m)=D_r(m;2)$ we first recall that Erd\H{o}s, Hajnal and Rado (see \cites{EHR65, GRS}) proved that there exist positive  $c_1:=c_1(r)$ and absolute positive constant $c_2$ such that
\begin{align}\label{eq:eq2}
    t_{r-2}(c_1m^2)\leq R_r(m)\leq t_{r-1}(c_2 m)
\end{align}
where the tower function $t_i$ is defined recursively as $t_0(x)=x$
and $t_{i+1}=2^{t_i(x)}$. It is conceivable that a similar bound holds true for $D_r$, but the standard techniques, in particular the
Stepping-up Lemma and shift graphs, do not seem to work for daisies. The probabilistic argument does work, but it only gives a lower bound that is far from the upper bound.




\begin{proposition}\label{prop:randomlb}
For integers $m,\ell,r\geq 2$, there exists positive constant $c:=c_r$ such that
\begin{align}\label{eq:wide}
D_r(m;\ell) \geq \ell^{cm^{r-1}}
\end{align}
holds.
\end{proposition}
The proof is a standard application of the Lov\'asz Local Lemma. For the upper bound, we were unable to say more than the bound on (\ref{eq:eq1}) for general values of $m$. However, for small values of $m$ we show that there exists a tower gap between $D_r(m)$ and $R_r(m)$. 



\begin{theorem}\label{prop:ramseyvsdaisy}
Given $\epsilon>0$, there exists $r_0$ such that for every $r\geq r_0$ the following holds. If $(1+\epsilon) r\leq m\leq (2-\epsilon)r$, then
\begin{align*}
    t_{\epsilon r/2}(D_r(m))\leq R_r(m).
\end{align*}
\end{theorem}

The proof of Theorem \ref{prop:ramseyvsdaisy} relies on an observation that the complement of a daisy is still a daisy and on a result on tower-type lower bounds on $R_r(m)$ when $m<2r$. For $m>2r$ we were unable to rule out even the possibility that $D_r(m;\ell)=R_r(m;\ell)$.

\begin{conj}
Let $m,r,\ell\geq 2$ be integers, then $\lim_{m\rightarrow \infty}\frac{D_r(m;\ell)}{R_r(m;\ell)}=0$.
\end{conj}

While there is a wide gap between the bounds given in (\ref{eq:eq1}) and (\ref{eq:wide}), with a slightly stronger definition of a daisy one can show fairly tight bounds. 



\begin{definition}
An $(r,m)$-\emph{superdaisy} $\cD_{\leq r}(K,M)$ with kernel $K$ and universe of petals $M$ of size $|M|=m$ is the system of all sets $X$ with
\begin{enumerate}
\item $K\subseteq X\subseteq K\cup M$,
\item $|X|\leq |K|+r$.
\end{enumerate}
\end{definition}
In other words, $\cD_{\leq r}(K,M)=\{K\}\cup \bigcup_{i=1}^r
\cD_i(K,M)$ where $\cD_i(K,M)$ is the daisy of kernel $K$ and universe
of petals $M$ with petal of size $i$.  It is easy to come up with a coloring that avoids monochromatic superdaisies. Indeed, one can take for example the coloring $\phi: \cP([n])\to \{0,1\}$ given by $\phi(X)=|X| \pmod{2}$. For this reason, instead of requiring the entire superdaisy to be monochromatic, we will be interested only when each daisy in the superdaisy is monochromatic.
We say that a superdaisy $\cD_{\leq r}(K,M)$ is \emph{level homogeneous,} if $\cD_i(K,M)$ is monochromatic for each $1\leq i \leq r$. We will denote by $D_{\leq r}(m;\ell)$ {the minimum $n$ such that for every coloring of all subsets of $[n]$ by $\ell$ colors, there exists a level homogeneous $(r,m)$-superdaisy.}


\begin{theorem}\label{prop:superdaisy}
If $\ell, r\geq 2$, then
\begin{align*}
R_r(m;\ell)\leq D_{\leq r}(m;\ell)\leq R_r(m+r-1;\ell^{r})
\end{align*}
holds for every $m>r$.
\end{theorem}

\begin{remark}\label{rmk:superdaisy}
The concept of superdaisies is closely related with bit-fixing extractors. Indeed, a bit-fixing extractor for sources of size $r$ and entropy $e$ is,
essentially, a coloring $\chi:2^{[n]}\rightarrow [2^e]$ of the subsets of $[n]$ by $2^e$ colors such that for every $(r,r)$-superdaisy $\cD$ all~$2^e$
colors are represented in~$\cD$ with almost the same
frequency.
\end{remark}



\subsection{Daisies of fixed kernel}

Another variant is the study of Ramsey number of daisies of bounded kernel. Analogously as in $D_r(m;\ell)$ we define the \emph{Ramsey number $D_r(m,k;\ell)$ of daises with kernel of size $k$} as the minimum integer $n$ with the property that any coloring $\phi:[n]^{(k+r)}\rightarrow [\ell]$ of the $(k+r)$-tuples of $[n]$ by $\ell$ colors yields a monochromatic copy of an $(r,m,k)$-daisy. The next theorem shows that for daisies of restricted kernel size, one can obtain tower-type lower bounds.

\begin{theorem}\label{thm:fixedkernel}
Let $k,\ell, r\geq 2$ be integers. Then the following two statements hold:
\begin{enumerate}
\item[(i)] $D_r(m,k;\ell^r)\geq R_r(\lceil m/(k+1) \rceil;\ell)$.
\item[(ii)] $D_r(m,k;\ell)\geq R_{r-k}(\lceil m/(k+1) \rceil - k;\ell)$ for $r>k$.
\end{enumerate}
\end{theorem}

Although a very natural variant of the problem, most of our paper will focus on the unrestricted version of Daisies. A follow-up paper~\cite{MS} will study the restricted variant in more detail.



\section{A lower bound on $D_r(m;\ell)$}\label{sec:randomlb}


The proof of Proposition \ref{prop:randomlb} is a standard application of the well-known Lov\'asz Local Lemma. The lemma was introduced by Erd\H{o}s and Lov\'{a}sz in \cite{EL}. 

\begin{lemma}[\cite{AS16}, Corollary 5.1.2]\label{lem:lll}
Let $A_1$, $A_2$, ..., $A_n$ be events in an arbitrary probability space. Suppose that each event $A_i$ is mutually independent of a set of all the other events $A_j$ but at most $d$, and that $\PP(A_i)\leq p$ for all $1\leq i \leq n$. If
\begin{align*}
    ep(d+1)\leq 1,
\end{align*}
then $\PP\left(\bigwedge_{i=1}^n\overline{A_i}\right)>0$
\end{lemma}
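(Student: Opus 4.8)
The plan is to deduce this symmetric statement from the general (asymmetric) form of the Local Lemma, which I would prove first. In the general form one is given a dependency graph---a graph $G$ on $\{1,\dots,n\}$ such that each $A_i$ is mutually independent of the family $\{A_j : ij\notin E(G)\}$---together with reals $x_1,\dots,x_n\in[0,1)$ satisfying $\PP(A_i)\le x_i\prod_{ij\in E(G)}(1-x_j)$ for every $i$; the conclusion is $\PP\big(\bigcap_{i=1}^n\overline{A_i}\big)\ge\prod_{i=1}^n(1-x_i)>0$. In our symmetric situation the hypothesis ``mutually independent of all but at most $d$ of the others'' says exactly that this dependency graph has maximum degree at most $d$, so once the general form is in hand I would apply it with the uniform choice $x_i=1/(d+1)$.

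The heart of the argument is the following claim, proved by induction on $|S|$: for every index $i$ and every $S\subseteq\{1,\dots,n\}\setminus\{i\}$ with $\PP\big(\bigcap_{j\in S}\overline{A_j}\big)>0$, one has $\PP\big(A_i\mid\bigcap_{j\in S}\overline{A_j}\big)\le x_i$. The base case $S=\emptyset$ is immediate from the hypothesis. For the inductive step I would split $S=S_1\sqcup S_2$, where $S_1$ consists of the neighbors of $i$ in $G$ lying in $S$ and $S_2$ of the non-neighbors, and write the conditional probability as the ratio of $\PP\big(A_i\cap\bigcap_{j\in S_1}\overline{A_j}\mid\bigcap_{k\in S_2}\overline{A_k}\big)$ divided by $\PP\big(\bigcap_{j\in S_1}\overline{A_j}\mid\bigcap_{k\in S_2}\overline{A_k}\big)$. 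The numerator is bounded above by $\PP\big(A_i\mid\bigcap_{k\in S_2}\overline{A_k}\big)=\PP(A_i)\le x_i\prod_{ij\in E(G)}(1-x_j)$, using that $A_i$ is mutually independent of $\{A_k:k\in S_2\}$. The denominator is bounded below by expanding it through the chain rule as a product of factors $1-\PP(A_{j_\ell}\mid\cdots)$, each of whose conditioning sets is a proper subset of $S$; applying the induction hypothesis to each factor gives the lower bound $\prod_{j\in S_1}(1-x_j)\ge\prod_{ij\in E(G)}(1-x_j)$. Dividing yields the desired bound $x_i$.

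With the claim established, I would conclude by the telescoping identity $\PP\big(\bigcap_{i=1}^n\overline{A_i}\big)=\prod_{i=1}^n\PP\big(\overline{A_i}\mid\bigcap_{j<i}\overline{A_j}\big)=\prod_{i=1}^n\big(1-\PP\big(A_i\mid\bigcap_{j<i}\overline{A_j}\big)\big)$, each factor being at least $1-x_i>0$ by the claim; this also certifies that every conditioning event along the way has positive probability, so nothing is undefined. Finally, to recover the stated symmetric form I set $x_i=1/(d+1)$ and check that $\PP(A_i)\le p\le\frac{1}{e(d+1)}$ implies the required $\PP(A_i)\le\frac{1}{d+1}\big(1-\frac{1}{d+1}\big)^d$: this reduces to the elementary inequality $\big(1-\frac{1}{d+1}\big)^d>1/e$, valid for all $d\ge1$ and following from $(1+1/d)^d<e$; the degenerate case $d=0$ of independent events is immediate.

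The main obstacle is the bookkeeping in the inductive step---correctly partitioning $S$ into neighbors and non-neighbors, invoking mutual independence only against the non-neighbor part $S_2$ to control the numerator, and ensuring each factor produced by the chain rule in the denominator is conditioned on a strictly smaller set so that the induction hypothesis genuinely applies. One must also track throughout that the conditioning events have positive probability; this is not an extra hypothesis but a byproduct of the claim, since each conditional probability of an $\overline{A_i}$ stays strictly below $1$.
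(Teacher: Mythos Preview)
The paper does not prove this lemma at all: it is quoted verbatim as Corollary~5.1.2 from Alon--Spencer and used as a black box in the proof of Proposition~\ref{prop:computlb}. Your proposal supplies precisely the standard textbook proof that appears in that reference---derive the asymmetric Local Lemma by induction on $|S|$ via the neighbor/non-neighbor split of the conditioning set, then specialize with $x_i=1/(d+1)$ and the inequality $(1+1/d)^d<e$. The argument is correct and complete.

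One small slip in your last sentence: you write that ``each conditional probability of an $\overline{A_i}$ stays strictly below~$1$,'' but what you need (and what the claim actually gives) is that each conditional probability of $A_i$ stays below $x_i<1$, hence the conditional probability of $\overline{A_i}$ stays \emph{above} $1-x_i>0$, which is what guarantees the conditioning events along the telescoping product are nondegenerate.
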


Proposition $\ref{prop:randomlb}$ follows from the next result by setting $n=\ell^{cm^{r-1}}$ for sufficiently small~$c$.

\begin{proposition}\label{prop:computlb}
If $n,m,r,\ell\geq 2$ and $n\geq m \geq r$ are integers satisfying
\begin{align*}
\binom{m}{r}n^m\ell^{1-\binom{m}{r}}<1,
\end{align*}
then $D_r(m;\ell)\geq n$.
\end{proposition}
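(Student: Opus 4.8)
The plan is to apply the Lov\'asz Local Lemma to a uniformly random $\ell$-colouring $\phi\colon\cP([n])\to[\ell]$, in which every subset of $[n]$ receives an independent uniform colour. For each ordered pair of disjoint sets $(K,M)$ with $M\subseteq[n]$, $|M|=m$, and $K\subseteq[n]\setminus M$ (with $|K|$ arbitrary), let $A_{K,M}$ be the event that the daisy $\cD_r(K,M)$ is monochromatic. If no event $A_{K,M}$ occurs then $\phi$ contains no monochromatic $(r,m)$-daisy, and exhibiting such a $\phi$ gives $D_r(m;\ell)\ge n$ (in fact $>n$, by restricting $\phi$ to $[n']$ for all $n'\le n$); so it suffices to prove $\PP\bigl(\bigwedge_{K,M}\overline{A_{K,M}}\bigr)>0$. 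The $\binom{m}{r}$ edges $\{K\cup P:P\in M^{(r)}\}$ of $\cD_r(K,M)$ are pairwise distinct sets, so $\PP(A_{K,M})=\ell\cdot\ell^{-\binom{m}{r}}=\ell^{1-\binom{m}{r}}=:p$.

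Next I would pin down the dependency structure. The event $A_{K,M}$ is determined by the colours of the sets in $\cD_r(K,M)$, all of which have size $|K|+r$; hence $A_{K,M}$ is mutually independent of the family of all $A_{K',M'}$ whose daisy $\cD_r(K',M')$ shares no edge with $\cD_r(K,M)$. To bound the number of exceptional pairs, fix an edge $X\in\cD_r(K,M)$, so $|X|=|K|+r=:k+r$. A daisy $\cD_r(K',M')$ contains $X$ precisely when $K'=X\setminus P'$ for some $P'\in X^{(r)}$ and $M'\supseteq P'$ with $|M'|=m$ and $M'\cap K'=\emptyset$; there are $\binom{k+r}{r}$ choices of $P'$ (equivalently of $K'$) and then $\binom{n-k-r}{m-r}$ choices for the remaining $m-r$ elements of $M'$, taken from $[n]\setminus X$. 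Summing over the $\binom{m}{r}$ edges of $\cD_r(K,M)$ shows the dependency degree $d$ obeys
\[
d+1\;\le\;\binom{m}{r}\binom{k+r}{r}\binom{n-k-r}{m-r}.
\]

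The crux is to convert this into a bound that is independent of $k$ and fits the hypothesis. Using $\binom{a}{b}\le a^{b}/b!$ throughout, together with the elementary optimization $\max_{0\le a\le n}a^{r}(n-a)^{m-r}=\frac{r^{r}(m-r)^{m-r}}{m^{m}}\,n^{m}\le n^{m}$ applied at $a=k+r$ (which lies in $[0,n]$ since $|K|+m\le n$), one obtains
\[
d+1\;\le\;\frac{m^{r}}{r!}\cdot\frac{(k+r)^{r}(n-k-r)^{m-r}}{r!\,(m-r)!}\;\le\;\frac{m^{r}n^{m}}{(r!)^{2}(m-r)!}.
\]
Consequently
\[
e\,p\,(d+1)\;\le\;\frac{e}{(r!)^{2}(m-r)!}\cdot m^{r}n^{m}\ell^{1-\binom{m}{r}}\;<\;\frac{e}{(r!)^{2}(m-r)!}\;\le\;\frac{e}{4}\;<\;1,
\]
where the middle strict inequality is exactly the assumed bound and the last step uses $r\ge 2$, so $(r!)^{2}\ge 4>e$, and $(m-r)!\ge 1$. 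The Local Lemma in the form quoted above then gives $\PP\bigl(\bigwedge_{K,M}\overline{A_{K,M}}\bigr)>0$, yielding the desired colouring and hence $D_r(m;\ell)\ge n$. (When $m=r$ the hypothesis reads $m^{r}n^{m}<1$, which is impossible, so that degenerate case is vacuous and needs no separate treatment.)

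I expect the dependency count to be the only genuine obstacle, and specifically the requirement that the uniform bound on $d+1$ absorb the Euler constant coming from the condition $ep(d+1)\le1$: the lazy estimate $d+1\le\binom{m}{r}n^{m}$ loses precisely a factor of $e$ and just fails, so one must keep the factorials $(r!)^{2}(m-r)!$ produced by the binomial coefficients (and exploit $a^{r}(n-a)^{m-r}\le n^{m}$ to render the bound $k$-free) in order to close the gap. Everything else is a routine random-colouring computation.
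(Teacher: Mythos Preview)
Your proof is correct and follows essentially the same approach as the paper: apply the Lov\'asz Local Lemma to a uniformly random $\ell$-colouring, with the same probability $p=\ell^{1-\binom{m}{r}}$ and the same dependency count $d+1\le\binom{m}{r}\binom{k+r}{r}\binom{n-k-r}{m-r}$. Your final numerical estimate is in fact tidier than the paper's: the paper passes directly from $d<\binom{m}{r}n^{m}$ to $ep(d+1)<m^{r}n^{m}\ell^{1-\binom{m}{r}}$ via ``a simple computation,'' whereas you explicitly retain the factorials $(r!)^{2}(m-r)!$ and use $a^{r}(n-a)^{m-r}\le n^{m}$ to make the bound $k$-free and cleanly absorb the constant $e$.
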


\begin{proof}
Consider a random $\ell$-coloring $\phi:\cP([n])\rightarrow [\ell]$, where each set is colored uniformly and independently. Let $M, K\subseteq [n]$ with $|M|=m$ such that $M\cap K=\emptyset$. For a daisy $\cD(K,M)=\{K\cup P:\: P \in M^{(r)}\}$, let $A(K,M)$ be the event that $\cD(K,M)$ is monochromatic with respect to the coloring $\phi$. Clearly
\[
p:=\PP(A(K,M))=\ell^{1-\binom{m}{r}}.
\]

Note that the event $A(K,M)$ only depends on the events $A(K',M')$ such that the corresponding daisies $\cD(K,M)$ and $\cD(K',M')$ are not edge disjoint. We will estimate the degree $d$ of the dependency graph. That is, the number of daisies sharing an edge with $\cD(K,M)$. Let $X\in \cD(K,M)$ be an edge of our daisy. There are $\binom{k+r}{r}$ possibilities for an $r$-set $P'$ to be a petal determined by $X$ in some $\cD(K',M')$. Once the petal $P'$ is determined, then $K'=X\setminus P'$. It remains to determine the rest of the universe of petals $M'$, i.e., $m-r$ elements outside of $X$. There are $\binom{n-k-r}{m-r}$ possibilities for these elements. Furthermore, we have to multiply it by the number of edges $X$ in $\cD(K,M)$, which is $\binom{m}{r}$. Hence the degree is 
\begin{align*}
d\leq \binom{m}{r}\binom{k+r}{r}\binom{n-k-r}{m-r}<\binom{m}{r}n^{m}.
\end{align*}
We need to show $ep(d+1)\leq 1$.
However, a simple computation shows that
\begin{align*}
ep(d+1)<\binom{m}{r}n^{m}\ell^{1-\binom{m}{r}}<1.
\end{align*}
Consequently, by Lemma \ref{lem:lll}, we have $\PP(\bigwedge_{K,M}\overline{A(K,M)})>0$, which means that for some $\ell$-coloring $\phi$, no $(r,m)$-daisy is monochromatic.
\end{proof}

\section{Proof of Theorem \ref{prop:ramseyvsdaisy}}\label{sec:ramseyvsdaisy}

In this section we prove Theorem \ref{prop:ramseyvsdaisy}. The main result in this section is the following lower bound on $2$-color Ramsey numbers when $m<2r$.

\begin{theorem}\label{thm:shiftramsey}
For $r\geq 3$ and $m\geq r+7$,
$$R_r(m)\geq 4 t_{r-2}(\lfloor(m-r+1)/4\rfloor).$$
\end{theorem}

Let us remark that the best known lower bounds on Ramsey number are due to Conlon, Fox and Sudakov \cite{CFS13} by a clever application of the Stepping-up lemma. They proved that
\[
R_r(m)\geq t_{r-2}(cm^2)
\]
for a positive constant $c>0$ and $m\geq 3r$. On the other hand, Theorem \ref{thm:shiftramsey} can be applied for a wider range of $m$. The proof of Theorem \ref{thm:shiftramsey} will use a different approach, which is based on shift graphs. Similar results were established in \cites{DLR95, PR24}.

Let $\Sh(n,r)$ be the \emph{shift graph} on $[n]^{(r)}$ given by
\[
E(\Sh(n,r))=\{\{\{x_1,\ldots,x_r\},\{x_2,\ldots,x_{r+1}\}\}:\: x_1<x_2<\ldots<x_{r+1} \in [n]\}.
\]
For our exposition we will find more convenient to consider $\Sh(n,r)$ as a directed graph with directed pairs $(\{x_1,\ldots,x_r\},\{x_2,\ldots,x_{r+1}\})$. When talking about chromatic numbers of directed graphs we will mean the ``usual'' chromatic number of a symmetric graph which results by replacing each directed edge $(x,y)$ by the unordered pair $\{x,y\}$.

The proof of Theorem \ref{thm:shiftramsey} relies on the following proposition. Let $f(t,\ell)$ be the largest integer $m$ with the property that for any directed graph $D$ with $\chi(D)=m$, there exists an $\ell$-coloring of its arcs with no monochromatic path with $t$ vertices. The next proposition was proved in \cite{DLR95}. Since the proof is simple we will sketch it here.

\begin{proposition}\label{prop:fact2}
$f(t,\ell)\geq (t-1)^{\lfloor \ell/2\rfloor}$ for $\ell\geq 2$ and $t\geq 3$.
\end{proposition}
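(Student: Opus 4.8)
The plan is the following. By the definition of $f(t,\ell)$ it suffices to show that every directed graph $D$ with $\chi(D)\leq (t-1)^{\lfloor \ell/2\rfloor}$ admits an $\ell$-colouring of its arcs with no monochromatic directed path on $t$ vertices (any $D$ with $\chi(D)=(t-1)^{\lfloor\ell/2\rfloor}$ is in particular such a digraph, so the $\leq$ versus $=$ in the definition is immaterial). Write $k=\lfloor\ell/2\rfloor$. Since $\chi(D)\leq(t-1)^k$ there is a proper vertex colouring $c\colon V(D)\to\{0,1,\ldots,t-2\}^k$; I will think of $c(v)$ as a word of length $k$ over the alphabet $\{0,\ldots,t-2\}$ (equivalently, the list of base-$(t-1)$ digits of an integer in $[0,(t-1)^k)$). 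Using $k$ coordinates instead of one monolithic colour set is exactly what lets us pair up colours two at a time.

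Next I would colour the arcs. An arc $(u,v)$ joins vertices of different colours, so the words $c(u)$ and $c(v)$ differ; let $i=i(u,v)\in[k]$ be the \emph{least} coordinate in which they differ, and give the arc $(u,v)$ the colour $(i,+)$ if $c_i(u)<c_i(v)$ and the colour $(i,-)$ otherwise. This uses at most $2k\leq\ell$ colours, so, discarding the unused colour when $\ell$ is odd, it is an $\ell$-arc-colouring.

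The key step is then a one-line monotonicity argument. Consider a directed path $v_1\to v_2\to\cdots\to v_j$ all of whose arcs have colour $(i,+)$. Because $i$ was chosen to be the least differing coordinate, for each consecutive pair the words $c(v_a)$ and $c(v_{a+1})$ agree in coordinates $1,\ldots,i-1$ and satisfy $c_i(v_a)<c_i(v_{a+1})$. Hence along the whole path coordinates $1,\ldots,i-1$ stay constant while coordinate $i$ is strictly increasing, and since $c_i$ only takes the $t-1$ values $0,\ldots,t-2$ we get $j\leq t-1$. So there is no monochromatic directed path on $t$ vertices in colour $(i,+)$, and colour $(i,-)$ is symmetric. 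This proves the bound.

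There is no genuine obstacle here; the only thing that needs a little care is the last step. One must use that $i$ is the \emph{least} (rather than an arbitrary) differing coordinate, since that is what forces the lower coordinates to be \emph{globally} constant along a monochromatic path — otherwise the strict increase could drift between different coordinates and the length bound would fail. One should also keep the colour count at $2\lfloor\ell/2\rfloor\leq\ell$, not $2\lceil\ell/2\rceil$, which is precisely where the floor in the statement comes from.
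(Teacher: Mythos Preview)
Your proof is correct and is essentially the same argument as the paper's: properly colour the vertices by vectors in $[t-1]^{\lfloor\ell/2\rfloor}$, colour each arc by the pair consisting of the first differing coordinate and the sign of the difference there, and observe that along any monochromatic directed path that coordinate is strictly monotone, bounding the path at $t-1$ vertices. The only minor remark is that your closing caveat overstates the role of ``least'': choosing the least differing coordinate is just a canonical way to make the arc-colouring well defined, whereas the length bound already follows from the fact that in colour $(i,+)$ coordinate $i$ strictly increases along any such path.
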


\begin{proof}[Sketch of proof of Proposition \ref{prop:fact2}]
Consider a digraph $D$ with $\chi(D)=(t-1)^{\lfloor \ell/2 \rfloor}$. Let $V(D)=\bigcup_{a\in [t-1]^{\lfloor \ell/2 \rfloor}} V_{a} $ be a color class partition of the vertices of $D$ indexed by the lattice points of the cube $[t-1]^{\lfloor \ell/2 \rfloor}$. We will color arcs of $D$ with $2\lfloor \ell/2 \rfloor \leq \ell$ colors as follows. For two color classes $V_{a}$ and $V_{b}$, let $j$ be the smallest index such that $a_j\neq b_j$. If $a_j<b_j$ color all the arcs $(x,y)\in V_{a}\times V_{b}$ by $2j-1$. On the other hand, if $a_j>b_j$, then color the arcs by $2j$. Clearly each directed path in each color has at most $t-1$ vertices.
\end{proof}

For a directed graph $G$, let $\partial G$ be the directed graph defined by $V(\partial G)=A(G)$ and 
\begin{align*}
A(\partial G)=\left\{((x,y),(z,w))\in A(G)^{(2)}:\: y=z\right\}. 
\end{align*}
That is, the vertices of $\partial G$ are the arcs of $G$ and the arcs of $\partial G$ are the oriented paths of length $2$. The next proposition is a corollary from Proposition \ref{prop:fact2} when $t=3$. 

\begin{proposition}[Lemma 5.2, \cite{PR24}]\label{prop:fact1}
Let $s\geq 1$ be an integer and $G$ a directed graph. If $\chi(\partial G)>2s$, then $\chi(G)>2^s$.
\end{proposition}

By considering $\Sh(n,r)$ as a directed graph, note that $\partial Sh(n,r)$ can be seen as a copy of $\Sh(n,r+1)$. Indeed, we can map the vertex $\{x_1,\ldots,x_{r+1}\}\in V(\Sh(n,r+1))$ with $x_1<\ldots<x_{r+1}$ to the directed pair $(\{x_1,\ldots,x_r\},\{x_2,\ldots,x_{r+1}\})$ in $V(\partial \Sh(n,r))$. The following is an immediate corollary.

\begin{corollary}\label{cor:chromaticshift}
Let $n,r,s\geq 1$ be integers. If $\chi(\Sh(n,r))>2s$, then $\chi(\Sh(n,r-1))>2^s$.
\end{corollary}

Now we are ready to give a proof of Theorem \ref{thm:shiftramsey}.

\begin{proof}[Proof of Theorem \ref{thm:shiftramsey}]
Set $n=R_r(m)$. Our aim is to find a lower bound on $n$. Consider a $2$-coloring $\phi:[n]^{(r)} \to [2]$ of the $r$-tuples of $[n]$. Note that by our observation that $\partial \Sh(n,r-1)$ is a copy of $\Sh(n,r)$, the coloring $\phi$ of the vertices of $\Sh(n,r)$ can be also interpreted as a coloring of the arcs of $\Sh(n,r-1)$.

By the definition of $n$, there exists a monochromatic set $X$ of size $m$. Let $x_1<\ldots<x_m$ be the elements of $X$. Since all $r$-tuples $\{x_{i_1},\ldots,x_{i_r}\}$ are of the same color, we have in particular that the set of all edges of the directed path
\begin{align*}
\{x_1,\ldots,x_{r-1}\}, \{x_2,\ldots,x_r\}, \ldots, \{x_{m-r+2},\ldots,x_m\}
\end{align*}
in $\Sh(n,r-1)$ are monochromatic. Since $\phi$ is arbitrary, we obtain that $\Sh(n,r-1)$ is a directed graph such that any $2$-coloring of its arcs yields an oriented monochromatic path with $m-r+2$ vertices of $\Sh(n,r-1)$. Thus, Proposition \ref{prop:fact2} with $\ell=2$ and $t=m-r+2$ gives us that
\begin{align}\label{eq:shift}
\chi(\Sh(n,r-1))>(m-r+1).
\end{align}

By induction we will observe that $\chi(\Sh(n,r-i))>4t_{i-1}\left(\lfloor (m-r+1)/4\rfloor\right)$ for $1\leq i \leq r-1$. The base case $i=1$ is just inequality (\ref{eq:shift}). Suppose that $\chi(\Sh(n,r-i))>4t_{i-1}\left(\lfloor(m-r+1)/4\rfloor\right)$ for $i\geq 1$. Then by Corollary \ref{cor:chromaticshift}, we obtain that
\begin{align*}
\chi(\Sh(n,r-i-1))>2^{2t_{i-1}\left(\lfloor(m-r+1)/4\rfloor\right)}\geq 4t_i\left(\left\lfloor(m-r+1)/4\right\rfloor\right)
\end{align*}
if $\lfloor(m-r+1)/4\rfloor\geq 2$, which holds for $m\geq r+7$. This finishes the induction. The result now follows since for $i=r-2$, we have $n=\chi(\Sh(n,1))>4t_{r-2}\left(\lfloor(m-r+1)/4\rfloor\right)$.
\end{proof}

We finish the Section by proving the theorem stated in the introduction. 

\begin{proof}[Proof of Theorem \ref{prop:ramseyvsdaisy}]
First, we are going to prove that $D_{r}(m)=D_{m-r}(m)$ for integers $m>r\geq 1$. Recalling Definition \ref{def:daisies}, the equality $D_r(m)=D_{m-r}(m)$ means that the existence of a $2$-coloring $\phi$ of $\cP([n])$ without a monochromatic $(r,m)$-daisy can be used to find a coloring $\psi^*$ which yields no monochromatic $(m-r,m)$-daisy. Let $n=D_{r}(m)-1$ and let $\phi:\cP([n])\rightarrow [2]$ be a $2$-coloring of all the subsets of $[n]$ without a monochromatic $(r,m)$-daisy. Consider the complementary coloring $\phi^*:\cP([n])\rightarrow [2]$ given by
\begin{align*}
\phi^*(X)=\phi([n]\setminus X)
\end{align*}
for every $X\subseteq [n]$.

We claim that $\phi^*$ does not contain monochromatic $(m-r,r)$-daisies. Suppose to the contrary that $\cD^*$ is a monochromatic $(m-r,r)$-daisy under the coloring $\phi^*$. Then consider the dual graph $\cD$ given by
\begin{align*}
    \cD=\{[n]\setminus X:\: X\in \cD^*\}.
\end{align*}
The hypergraph $\cD$ is a monochromatic $(r,m)$-daisy under the coloring $\phi$, contradicting our choice of $\phi$. Hence, $D_{m-r}(m)\leq D_r(m)$. Since $1\leq r < m$ is arbitrary, it follows that $D_{r}(m)=D_{m-r}(m)$.

Note that Erd\H{o}s, Hajnal and Rado upper bound on Ramsey numbers in (\ref{eq:eq2}) holds for $m\geq 1$. Together with the observation in $(\ref{eq:eq1})$, we obtain that
\begin{align}\label{eq:updaisy}
D_r(m)=D_{m-r}(m)\leq R_{m-r}(m)\leq t_{m-r-1}(cm)\leq t_{m-r-1}(2cr)
\end{align}
for $m<2r$ and absolute positive constant $c$. Moreover, Theorem \ref{thm:shiftramsey} gives for $m\geq (1+\epsilon)r$ that
\begin{align}\label{eq:downramsey}
    R_r(m)\geq t_{r-2}(\lfloor(m-r+1)/4\rfloor)\geq t_{r-2}(\epsilon r/4)
\end{align}
Then the statement follows for sufficiently large $r$ by combining (\ref{eq:updaisy}) and (\ref{eq:downramsey}). Indeed,
\begin{align*}
    t_{\epsilon r/2}(D_r(m))\leq t_{m-r-1+\epsilon r/2}(2cr)\leq t_{r-2}(\epsilon r/4)\leq R_r(m),
\end{align*}
for $m<(2-\epsilon)r$ and sufficiently large $r$.
\end{proof}

\section{Proof of Theorem \ref{prop:superdaisy}}\label{sec:superdaisy}

We start this section by noting that since the complete $i$-graph $K_m^{(i)}$ is an $(i,m)$-daisy with empty kernel, one could restrict the problem to find a level homogeneous superdaisy to the problem of finding a level homogeneous daisy with empty kernel. In this case, the problem is the same as finding a set $M\subseteq [n]$ that is a monochromatic with respect to all possible uniformities from $1$ to $r$. It turns out that we are able to bound the last problem only using the Ramsey number for $r$-tuples.

\begin{proposition}\label{prop:upsuper}
\[
D_{\leq r}(m;\ell)\leq R_r(m+r-1;\ell^{r}) 
\]
\end{proposition}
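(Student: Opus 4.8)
The goal is to show that $D_{\leq r}(m;\ell)\leq R_r(m+r-1;\ell^r)$, i.e.\ that once $n$ is large enough to force a monochromatic $K^{(r)}_{m+r-1}$ under any $\ell^r$-coloring of the $r$-sets, we can already force a level homogeneous $(r,m)$-superdaisy with empty kernel under any $\ell$-coloring of all subsets.

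\textbf{The approach.} Let $n = R_r(m+r-1;\ell^r)$ and fix an arbitrary coloring $\phi\colon\cP([n])\to[\ell]$. The plan is to bundle the colorings of all the uniformities $1,\ldots,r$ into a single auxiliary $\ell^r$-coloring of the $r$-sets, then apply the ordinary hypergraph Ramsey number to that auxiliary coloring. Concretely, given an $r$-set $\{x_1<x_2<\dots<x_r\}\subseteq[n]$, I would define $\psi(\{x_1,\dots,x_r\})\in[\ell]^r$ to be the tuple whose $i$-th coordinate records $\phi$ of the initial segment $\{x_1,\dots,x_i\}$, for $1\le i\le r$. By the defining property of $R_r(m+r-1;\ell^r)$, there is a set $W\subseteq[n]$ with $|W|=m+r-1$ that is monochromatic for $\psi$; write $W=\{w_1<w_2<\dots<w_{m+r-1}\}$.

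\textbf{Extracting the superdaisy.} Now I would take $M=\{w_1,\dots,w_m\}$, the first $m$ elements of $W$, and claim that $M$ is monochromatic under $\phi$ simultaneously in every uniformity $1\le i\le r$, which is exactly saying $\cD_{\le r}(\emptyset,M)$ is level homogeneous. Fix $i$ and take any $i$-subset $S\subseteq M$; I need all such $S$ to get the same $\phi$-color, namely the $i$-th coordinate of the common $\psi$-value on $W$. Since $|S|=i\le r$ and $S\subseteq M$ lies among the first $m\le m+r-1$ elements of $W$, there are at least $r-i$ further elements of $W$ lying above $\max S$; append $r-i$ of them to $S$ to get an $r$-set $T\subseteq W$ whose first $i$ elements (in the order on $W$) are exactly the elements of $S$. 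Then $\psi(T)$ has $i$-th coordinate $\phi(S)$, and since $T\subseteq W$ is $\psi$-monochromatic, $\phi(S)$ equals the fixed $i$-th coordinate of the common value — independent of the choice of $S$. Hence $\cD_i(\emptyset,M)$ is monochromatic for each $i$, so $M$ yields a level homogeneous $(r,m)$-superdaisy, and $D_{\le r}(m;\ell)\le n = R_r(m+r-1;\ell^r)$.

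\textbf{Main obstacle.} The content is essentially bookkeeping, and the only real subtlety is the $+r-1$ in $m+r-1$: I need that \emph{every} $i$-subset of $M$, for every $i\le r$, can be padded \emph{upward within $W$} to a full $r$-set, and this is precisely why $W$ must have $r-1$ elements beyond the $m$ that form $M$ (an $i$-set at the very top of $M$ needs $r-i$ spare elements above it, and $r-i\le r-1$). So the care needed is in (a) choosing the ``initial-segment'' encoding $\psi$ so that the color of $S$ is read off from the coordinates determined by $S$ alone, and (b) checking the counting that guarantees enough room above $\max S$ inside $W$; both are routine once set up correctly.
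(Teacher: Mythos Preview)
Your proposal is correct and follows exactly the paper's approach: encode $\phi$ on initial segments into an $\ell^r$-coloring $\psi$ of $r$-sets, apply $R_r(m+r-1;\ell^r)$ to get a $\psi$-monochromatic $W$, and take the first $m$ elements as $M$ with empty kernel. Your write-up is in fact more explicit than the paper's about the padding step and the reason for the $+\,r-1$, which the paper leaves to the reader.
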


\begin{proof}
Let $n=R_r(m+r-1;\ell^{r})$ and consider an arbitrary $\ell$-coloring $\phi: \cP([n])\to [\ell]$ of the subsets of $[n]$. We define an $\ell^{r}$-coloring $\psi: [n]^{(r)}\to [\ell]^{r}$ of the $r$-tuples of $[n]$ given as follows. Let $X=\{x_1,\ldots,x_r\} \in [n]^{(r)}$ be an $r$-tuple of $[n]$. Then 
\begin{align*}
    \psi(X)_i=\phi(\{x_1,x_2,\ldots,x_i\})
\end{align*}
for every $1\leq i \leq r$. By our choice of $n$, there is a set $Y \subseteq [n]$ of size $m+r-1$ monochromatic with respect to the coloring $\psi$. Let $M$ be the subset consisting of the first $m$ elements of $Y$. Thus, by the definition of $\psi$, for every $1\leq i \leq r$ the family $M^{(i)}$ is monochromatic. This implies that $\cD_{\leq r}(\emptyset, M)$ is level homogeneous.


\end{proof}

A more careful analysis of the standard proof of Ramsey's theorem (see e.g. \cite{GRS}) gives a similar upper bound as in Proposition \ref{prop:upsuper} (with even better constants). However, for the sake of simplicity we decided to present the proof above. 

For the lower bound, we were able to show that $D_{\leq r}(m;\ell)$ is at least the Ramsey number for $r$-graphs in $\ell$ colors.

\begin{proposition}
\[
D_{\leq r}(m;\ell)\geq R_r(m;\ell)
\]
\end{proposition}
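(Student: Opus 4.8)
The plan is to show $D_{\leq r}(m;\ell)\geq R_r(m;\ell)$ by exhibiting, for every $n<R_r(m;\ell)$, a coloring $\phi\colon\cP([n])\to[\ell]$ of \emph{all} subsets of $[n]$ that admits no level homogeneous $(r,m)$-superdaisy. By definition of $R_r(m;\ell)$, since $n<R_r(m;\ell)$ there is a coloring $\psi\colon[n]^{(r)}\to[\ell]$ of the $r$-tuples with no monochromatic $K_m^{(r)}$. The idea is to extend $\psi$ to a coloring of all of $\cP([n])$ in such a way that any monochromatic $r$-th level of a superdaisy forces a monochromatic $K_m^{(r)}$ under $\psi$, contradicting the choice of $\psi$.

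First I would define $\phi$ as follows: for a set $X\subseteq[n]$ with $|X|=r$, put $\phi(X)=\psi(X)$; for every set $X$ with $|X|\neq r$, assign $\phi(X)$ arbitrarily, say $\phi(X)=1$. Now suppose for contradiction that $\cD_{\leq r}(K,M)$ is level homogeneous for some disjoint $K,M\subseteq[n]$ with $|M|=m$. Consider the top level $\cD_r(K,M)=\{K\cup P:\ P\in M^{(r)}\}$; by level homogeneity it is monochromatic in some color $c$, i.e.\ $\phi(K\cup P)=c$ for all $P\in M^{(r)}$. Here I would use the (mild) hypothesis $m>r$, which guarantees $M^{(r)}\neq\emptyset$ so that this level is non-trivial. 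The key step is to observe that if $K=\emptyset$, then each edge $K\cup P=P$ is itself an $r$-set, so $\phi(P)=\psi(P)=c$ for every $P\in M^{(r)}$, which means $M$ spans a monochromatic $K_m^{(r)}$ under $\psi$ — contradiction.

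The main obstacle is that $K$ need not be empty: if $|K|=k>0$ then the edges $K\cup P$ have size $r+k\neq r$, so their $\phi$-colors were assigned arbitrarily and carry no information about $\psi$. The fix is to change the reduction so that we always look at an $r$-uniform level instead of the $(r+k)$-uniform top level. I would instead choose $n$ colorings more carefully: keep $\phi$ defined on $r$-sets by $\psi$ as above, but now note that level homogeneity of $\cD_{\leq r}(K,M)$ in particular makes $\cD_i(K,M)$ monochromatic for \emph{every} $1\le i\le r$, and we only need one of these to produce an $r$-uniform structure. Concretely, if $|K|=k$, then any $r$-set of the form $K'\cup P'$ where $K'\subseteq K$ and $P'\in M^{(r-|K'|)}$ ... — but this still does not directly give a $K_m^{(r)}$. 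The cleanest route, and the one I would actually take, is to exploit \eqref{eq:eq3}-style complementation or, more simply, to observe that it suffices to forbid level homogeneous superdaisies \emph{with empty kernel}: as noted in the paragraph preceding Proposition~\ref{prop:upsuper}, $K_m^{(r)}$ is itself the top level of $\cD_{\leq r}(\emptyset,M)$, so a level homogeneous superdaisy of any kernel is not needed — we only must ensure no $\cD_{\leq r}(\emptyset,M)$ is level homogeneous to get the lower bound $D_{\leq r}(m;\ell)\ge R_r(m;\ell)$ is \emph{not} automatic. Let me restate the clean argument: with $\phi$ agreeing with $\psi$ on $r$-sets and arbitrary elsewhere, if $\cD_{\leq r}(K,M)$ were level homogeneous then its level $\cD_{r-|K|}$... no. The correct and simplest observation is that level homogeneity of $\cD_{\le r}(K,M)$ implies in particular that $\cD_r(K,M)$ is monochromatic, and taking $|K|=k$, the sets $\{K\cup P: P\in M^{(r)}\}$ all have size $k+r$; but we may instead define $\phi$ on $(k+r)$-sets for the \emph{relevant} $k$ — since the superdaisy has a fixed kernel size, and we are free to pick $\phi$ on all sets. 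So: for each size $j$, restrict attention to $j=r$; a monochromatic $r$-daisy with \emph{empty} kernel is exactly a monochromatic $K_m^{(r)}$, giving the bound for that case, and for nonempty kernel one argues that monochromaticity of $\cD_r(K,M)$ plus the recursive structure reduces to a smaller Ramsey problem. I expect the genuine difficulty to lie precisely in handling nonempty kernels, and I would resolve it by a Ramsey-theoretic reduction: color an $r$-set $X\subseteq[n]$ by the tuple recording $\psi$ restricted appropriately, mirroring the proof of Proposition~\ref{prop:upsuper} but in reverse, so that a level homogeneous superdaisy of \emph{any} kernel yields a monochromatic $K_m^{(r)}$ for the original $\psi$; verifying that this encoding uses only $\ell$ colors (not $\ell^r$) on the $r$-sets is the crux, and is where I would spend the bulk of the effort.
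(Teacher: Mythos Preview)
Your proposal correctly identifies the central difficulty --- handling superdaisies with nonempty kernel --- but it never resolves it. The naive coloring you try first (set $\phi=\psi$ on $r$-sets, arbitrary elsewhere) is indeed hopeless: if $|K|=k>0$, then every edge of $\cD_r(K,M)$ has size $k+r$, so its $\phi$-color is the arbitrary value~$1$, and the entire superdaisy $\cD_{\leq r}(K,M)$ is trivially level homogeneous as soon as $k\geq 1$. Your subsequent attempts (passing to a different level $\cD_i(K,M)$, restricting to empty kernels, or reversing the encoding of Proposition~\ref{prop:upsuper}) do not work either: the definition of $D_{\leq r}(m;\ell)$ requires a coloring with \emph{no} level homogeneous superdaisy of \emph{any} kernel size, so you cannot simply ignore nonempty kernels, and there is no evident way to ``decode'' $\psi$ from a level of size $\neq r$ if you have not built $\psi$ into the coloring of those levels in the first place. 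The closing sentence concedes this: you are left hoping an encoding with only $\ell$ colors exists, without exhibiting one.

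The missing idea is to define $\phi$ on a set $X$ with $|X|\geq r$ as the \emph{sum modulo $\ell$} of the $\psi$-colors of all $r$-subsets of $X$:
\[
\phi(X)\;=\;\sum_{R\in X^{(r)}}\psi(R)\pmod{\ell}.
\]
This uses exactly $\ell$ colors. The point is that if $\cD_{\leq r}(K,M)$ is level homogeneous, one can peel the kernel off layer by layer: writing $\psi_K(P)=\sum_{J\in K^{(r-|P|)}}\psi(J\cup P)$ for petals $P$ of size $<r$, an inclusion--exclusion identity gives
\[
\psi_K(P)=\phi(K\cup P)-\sum_{J\subsetneq P}\psi_K(J),
\]
and induction on $|P|$ shows that $\psi_K$ is constant on each $M^{(i)}$. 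At $i=r$ this says $\psi$ itself is constant on $M^{(r)}$, i.e.\ $M$ spans a monochromatic $K_m^{(r)}$, the desired contradiction. This additive trick is the actual content of the proof; none of the routes you sketch reaches it.
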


\begin{proof}
Let $n=R_r(m;\ell)-1$ and consider an $\ell$-coloring $\phi:[n]^{(r)}\to \{0,1,\ldots,\ell-1\}$ of the $r$-tuples of $[n]$ with no monochromatic clique $K^{(r)}_m$. We define a coloring $\psi: \cP([n])\rightarrow \{0,1,\ldots,\ell-1\}$ by
\begin{align*}
\psi(X)=\begin{cases}
\sum_{R\in X^{(r)}}\phi(R) \pmod{\ell}, &\quad \text{if $|X|\geq r$}\\
0, &\quad \text{otherwise},
\end{cases}
\end{align*}
for $X\subseteq [n]$. We claim that $\psi$ is a coloring with no level homogeneous $(r,m)$-superdaisy.

Assume by contradiction that there exists a $(r,m)$-superdaisy $\cD_{\leq r}(K,M)$ that is level homogeneous with $|K|=k$ and $|M|=m$. Since $\cD_i(K,M)$ is monochromatic for each $1\leq i \leq r$, let $c_i$ be the color of the edges of size $k+i$.

Let $\psi_K$ be an auxiliary $\ell$-coloring $\psi_K:M^{(\leq r)} \to \{0,1,\ldots,\ell-1\}$ of the subsets $P\subseteq M$, $|P|\leq r$ defined by
\begin{align*}
\psi_K(P)=\begin{cases}
\sum_{J\in K^{(r-|P|)}}\phi(J\cup P) \pmod{\ell}, &\quad \text{if $|K\cup P|\geq r$}\\
0, &\quad \text{otherwise}.
\end{cases}
\end{align*}
In other words, $\psi_K(P)$ is the contribution of the coloring $\phi$ of the $r$-edges containing $P$ from $[n]^{(r)}$ to the color of $\psi(K\cup P)$. 

We will show by induction that there are constants $d_i\in\{0,1,\ldots,\ell-1\}$ for $0\leq i \leq r$ such that $\psi_K\mid_{M^{(i)}}\equiv d_i$, i.e., for every petal $P\in M^{(i)}$ we have $\psi_K(P)=d_i$.

For $i=0$ there is nothing to do, because $M^{(0)}=\{\emptyset\}$. In this case just take $d_0=\psi_K(\emptyset)$. Now suppose that $i>0$ and let $P\in M^{(i)}$. We claim that
\begin{align}\label{eq:restriction}
\psi_K(P)=\psi(K\cup P)-\sum_{L\subsetneq P} \psi_K(L).
\end{align}
If $|K\cup P|<r$, then equation (\ref{eq:restriction}) holds since $\psi(K\cup P)=0$ and $\psi_K(L)=0$ for every $L\subseteq P$. Now let $t=\min\{k,r\}$. Then
\begin{align*}
\psi(K\cup P)=\sum_{\ell=r-t}^i\sum_{J\in K^{(r-\ell)}}\sum_{L\in P^{(\ell)}}\phi(J\cup L)=\sum_{\ell=r-t}^i\sum_{L\in P^{(\ell)}}\psi_K(L)
\end{align*}
Since by definition any set $L\subseteq P$ of size $|L|<r-t$ is such that $\psi_K(L)=0$, we have 
\begin{align*}
    \psi(K\cup L)=\sum_{\ell=0}^i\sum_{L\in P^{(\ell)}}\psi_K(L)=\psi_K(P)+\sum_{L\subsetneq P}\psi_K(L),
\end{align*}
which proves (\ref{eq:restriction}).

Using that $\psi(K\cup P)=c_i$ and by the induction hypothesis, we obtain
\begin{align*}
\psi_K(P)=c_i-\sum_{j=0}^{i-1}\binom{i}{j}d_j,
\end{align*}
which does not depend on the choice of $P$. Thus setting $d_i=c_i-\sum_{j=0}^{i-1}\binom{i}{j}d_j$ gives us that $\psi_K\mid_{M^{(i)}}\equiv d_i$.

In particular, for $i=r$, the last paragraph shows that $M^{(r)}$ is monochromatic with respect to $\psi_K$. Since $\psi_K(P)=\phi(P)$ for every $P\in M^{(r)}$, we have that $M^{(r)}$ is a monochromatic $K_m^{(r)}$ with respect to the coloring $\phi$, which contradicts our assumption on $\phi$.
\end{proof}

\section{Simple daisies and daisies of kernel of given size}\label{sec:fixed}

We now study special cases of the daisy problem. We start by defining a simple daisy, a special type of daisy with the property that the universe of petals separates the kernel in two parts.

\begin{definition}
A daisy $\cD_r(K,M)$ is \emph{simple} if there exists a partition $K=K_0\cup K_1$ of the kernel such that $K_0<M<K_1$, i.e., $\max(K_0)<\min(M)\leq \max(M)<\min(K_1)$.
\end{definition}

Although we do not have good lower bounds for the Ramsey number of daisies, we can derive better lower bounds for simple daisies at the cost of increasing the number of colors. More precisely, let $D^{\smp}_r(m;\ell)$ denote the minimum integer $n$ such that any $\ell$-coloring of $\cP([n])$ yields a monochromatic simple $(r,m)$-daisy.

\begin{proposition}\label{prop:simple}
If $\ell,r \geq 2$, then
\begin{align*}
D^{\smp}_r(m;\ell^r)\geq R_r(m;\ell)
\end{align*}
\end{proposition}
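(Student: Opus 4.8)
The plan is to mimic the lower-bound argument for superdaisies (Proposition~6.3, the bound $D_{\leq r}(m;\ell)\geq R_r(m;\ell)$), adapting it to the situation where the kernel is separated by the universe of petals. Let $n=R_r(m;\ell)-1$ and fix an $\ell$-coloring $\phi:[n]^{(r)}\to\{0,1,\ldots,\ell-1\}$ with no monochromatic $K_m^{(r)}$. I want to manufacture an $\ell^r$-coloring $\psi:\cP([n])\to[\ell]^r$ (one coordinate per ``level'' of the petal, as in the superdaisy proof and in Proposition~\ref{prop:upsuper}) so that any monochromatic simple $(r,m)$-daisy would force a monochromatic $K_m^{(r)}$ for $\phi$. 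The key point is that in a \emph{simple} daisy $\cD_r(K,M)$ with $K=K_0\cup K_1$ and $K_0<M<K_1$, every edge $K\cup P$ (for $P\in M^{(r)}$) has its first $|K_0|$ elements equal to $K_0$, its last $|K_1|$ elements equal to $K_1$, and the petal $P$ occupying the middle $r$ positions; this rigid order structure is what lets us read off the petal from the edge.

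The construction I have in mind: for an $r$-set $X=\{x_1<\cdots<x_r\}\subseteq[n]$, define $\psi(X)_i$ to be a ``partial-sum'' type invariant of $\phi$ analogous to the one in the proof of Proposition~6.3, but keyed to the order position of an initial segment of $X$ — e.g.\ $\psi(X)_i=\sum_{R\in\{x_1,\ldots,x_i\}^{(r)}}\phi(R)\bmod\ell$ with the convention that the sum is $0$ when $i<r$. (I will likely also want to encode something that is additive under adding a fixed prefix and a fixed suffix, so that the ``telescoping'' works from both ends of the simple daisy.) Then run the same induction as in the superdaisy proof: given a monochromatic simple $(r,m)$-daisy $\cD_r(K,M)$ with $|K_0|=k_0$, $|K_1|=k_1$, and colour $c$ of the edges, define an auxiliary colouring $\psi_{K}$ on $M^{(<r)}$ summing $\phi$ over those $r$-sets of $K\cup P$ that are ``pinned'' by $K_0$ on the left and $K_1$ on the right, and show by induction on $|P|$ that $\psi_K$ is constant on each level $M^{(i)}$. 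At level $i=r$ this says $M^{(r)}$ is monochromatic for $\phi$, i.e.\ a monochromatic $K_m^{(r)}$, contradicting the choice of $\phi$. Since $\psi$ uses $\ell^r$ colours, this gives $D^{\smp}_r(m;\ell^r)\geq n+1=R_r(m;\ell)$.

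The main obstacle — and the reason the statement is for simple daisies and costs a factor of $r$ in colours — is arranging the invariant $\psi$ so that monochromaticity of the daisy really does pin down $\phi$ on all of $M^{(r)}$. In the superdaisy proof this works because the superdaisy contains edges of \emph{every} size $|K|+1,\ldots,|K|+r$, giving $r$ equations that solve (by inclusion–exclusion on $\binom{i}{j}$) for the colour of each level. Here a daisy has edges of only one size, so I need the $r$ coordinates of $\psi$ to supply the missing levels: $\psi(K\cup P)_i$ should ``see'' only the first $k_0+i$ elements of the edge — that is, $K_0$ together with the first $i$ elements of $P$ — which is exactly where the simplicity hypothesis $K_0<M<K_1$ (and the resulting fixed order of the edge) is essential. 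I would check carefully that with this choice, $\psi(K\cup P)_i$ depends on $P$ only through its first $i$ elements in a way that telescopes, so that the induction on petal size goes through as before; the suffix $K_1$ plays no role in $\psi$ but is needed only to make the daisy genuinely a daisy on $m$ petal-elements with the petals landing in prescribed positions. That bookkeeping is routine once the definition of $\psi$ is pinned down, but getting the definition exactly right is the crux.
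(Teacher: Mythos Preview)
Your proposal has a genuine gap: the coloring $\psi$ is never actually defined in a way that works, and the specific candidate you give cannot be made to work along the lines you sketch.

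Concretely, you suggest $\psi(X)_i=\sum_{R\in\{x_1,\ldots,x_i\}^{(r)}}\phi(R)\pmod\ell$ with $i$ ranging over $1,\ldots,r$. For $i<r$ this sum is empty, so only the $r$-th coordinate carries information, namely $\phi(\{x_1,\ldots,x_r\})$. For a simple daisy edge $K_0\cup P\cup K_1$ with $|K_0|\geq r$, this is the colour of an $r$-subset of $K_0$ and is independent of $P$; so the daisy is automatically monochromatic for $\psi$ and nothing about $\phi\!\restriction\! M^{(r)}$ is recovered. More generally, your stated goal that ``$\psi(K\cup P)_i$ see only the first $k_0+i$ elements'' cannot be realised by any fixed map $\psi$ on $\cP([n])$, because $k_0$ depends on the daisy and not on the set $X$. (Relatedly, your remark that ``$K_1$ plays no role in $\psi$'' is a red flag: any $\psi$ depending only on an initial segment of $X$ is constant on every simple daisy with $|K_0|$ large, so it cannot encode $\phi$ on $M^{(r)}$.) The superdaisy inclusion--exclusion you hope to replicate genuinely uses edges of $r$ different sizes, and your $r$ coordinates do not substitute for those missing levels unless they somehow locate the petal inside the edge --- which is exactly the missing idea.

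The paper's proof supplies that idea with a different and much simpler invariant. For $X=\{x_1<\cdots<x_t\}$ it sets
\[
\psi(X)_i=\sum_{s\equiv i\ (\mathrm{mod}\ r)}\phi(\{x_s,x_{s+1},\ldots,x_{s+r-1}\})\pmod{\ell},
\]
a sum over \emph{consecutive} $r$-blocks of $X$ whose starting index lies in a fixed residue class. For two edges $K_0\cup P\cup K_1$ and $K_0\cup P'\cup K_1$ of a simple daisy, the only consecutive $r$-block starting at an index $\equiv k_0+1\pmod r$ that differs between them is the petal itself; all other such blocks lie entirely in $K_0$ or entirely in $K_1$. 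Hence equality of $\psi$ in coordinate $i_0+1$ (where $i_0\equiv k_0\pmod r$) forces $\phi(P)=\phi(P')$ directly, with no induction or inclusion--exclusion. The residue-class indexing is precisely what lets $\psi$ isolate the petal without knowing $k_0$; this is the step your sketch lacks.
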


\begin{proof}
Let $\phi:[n]^{(r)}\to \{0,1,\ldots,\ell-1\}$ be an $\ell$-coloring of the $r$-tuples of $[n]$ with no monochromatic $K_m^{(r)}$. We will define a coloring $\psi: \cP([n])\to \{0,1,\ldots, \ell-1\}^r$ from the subsets of $[n]$ to the ordered $r$-tuples of $\{0,1,\ldots,\ell-1\}$ as follows. Let $X=\{x_1,\ldots,x_t\}\subseteq [n]$. We define $\psi(X)$ by 
\begin{align*}
    \psi(X)_i=\sum_{s= i \pmod{r}}\phi(\{x_s,x_{s+1}\ldots,x_{s+r-1}\}) \pmod{r}
\end{align*}
for $1\leq i \leq r$. In other words, the $i$-th coordinate of $\psi(X)$ is given by summing the colors of all the blocks of $r$ consecutive elements of $X$ starting with an index congruent to $i \pmod{r}$.

Suppose that there is a monochromatic simple daisy $\cD=\cD(K_0\cup K_1,M)$ with respect to $\psi$ and assume that $|K_0|=i_0 \pmod{r}$. Let $P, P' \in M^{(r)}$ and consider the edges $X=K_0\cup P\cup K_1=\{x_1,\ldots,x_t\}$ and $X'=K_0\cup P'\cup K_1=\{x_1',\ldots,x_t'\}$ in $\cD$. Note that the petals $P$ and $P'$ corresponds to the block of $r$ consecutive elements starting with the index $i_0+1$ in the edges $X$ and $X'$, respectively. Moreover, because $|P|=|P'|=r$, all the other blocks of $r$ consecutive elements in $X$ and $X'$ starting with index congruent to $i_0+1 \pmod{r}$ are completely inside $K_0\cup K_1$ and consequently $\phi(\{x_s,\ldots,x_{s+r-1}\})=\phi(\{x_s',\ldots,x_{s+r-1}'\})$ for $s= i_0+1 \pmod{r}$ and $s\neq i_0+1$. Therefore, $\psi(X)_{i_0+1}=\psi(X')_{i_0+1}$ implies that $\phi(P)=\phi(P')$. Hence $M^{(r)}$ is monochromatic with respect to $\phi$, which contradicts our assumption on $\phi$. 
\end{proof}

A natural variation of the daisy problem introduced in Section \ref{sec:intro} is to determine the Ramsey number of daisies of fixed kernel size. To be more precise, we define $D_r(m,k;\ell)$ as the minimum integer $n$ with the property that any $\ell$-coloring $\phi:[n]^{(k+r)} \to [\ell]$ of the $(k+r)$-tuples of $[n]$ yields a monochromatic copy of an $(r,m,k)$-daisy. 

The first remark about the problem is that one can immediately obtain an upper bound from the original Ramsey number. Indeed, given an $\ell$-coloring $\phi:[n]^{(k+r)}\to [\ell]$ we can define a coloring $\phi':[n-k]^{(r)}\to [\ell]$ by
\begin{align*}
    \phi'(\{x_1,\ldots,x_{r}\})=\phi(\{x_1,\ldots,x_r,n-k+1,\ldots,n\}) 
\end{align*}
for $\{x_1,\ldots,x_r\}\in [n-k]^{(r)}$. It is not difficult to check that a monochromatic clique with respect to $\phi'$ corresponds to a monochromatic daisy of kernel $K=\{n-k+1,\ldots,n\}$ with respect to $\phi$. Hence,
\begin{align*}
D_r(m,k;\ell)\leq R_r(m;\ell)+k.
\end{align*}

In the rest of this section, we will give a proof of Theorem \ref{thm:fixedkernel}. Our approach to provide lower bounds for $D_r(m,k;\ell)$ is based on the concept of simple daisies given above. The concept was further explored in \cite{MS} to prove Theorem \ref{th:fixdaisy} below. As in the definition of $D_r(m,k;\ell)$, let $D_r^{\smp}(m,k;\ell)$ denote the minimum integer $n$ such that any $\ell$-coloring of the $(k+r)$-tuples of $[n]$ yields a monochromatic simple $(r,m,k)$-daisy.

\begin{proposition}\label{prop:pigeonhole}
$D_r(m,k;\ell)\geq D_r^{\smp}(\left\lceil m/(k+1) \right\rceil,k;\ell)$
\end{proposition}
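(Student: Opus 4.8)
The plan is to prove the contrapositive at the level of a single fixed coloring: I will show that every monochromatic $(r,m,k)$-daisy necessarily \emph{contains} a monochromatic simple $(r,\lceil m/(k+1)\rceil,k)$-daisy having the same kernel. Granting this, set $N=D_r^{\smp}(\lceil m/(k+1)\rceil,k;\ell)$ and $n=N-1$. By definition of $N$, there is an $\ell$-coloring $\phi\colon[n]^{(k+r)}\to[\ell]$ admitting no monochromatic simple $(r,\lceil m/(k+1)\rceil,k)$-daisy; by the claim the \emph{same} coloring $\phi$ then admits no monochromatic $(r,m,k)$-daisy either, so $D_r(m,k;\ell)>n=N-1$, which is exactly the asserted inequality $D_r(m,k;\ell)\geq D_r^{\smp}(\lceil m/(k+1)\rceil,k;\ell)$.

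To prove the claim, let $\cD_r(K,M)$ be a monochromatic $(r,m,k)$-daisy with $|K|=k$ and $|M|=m$. Writing $K=\{a_1<\cdots<a_k\}$ in the natural order of $[n]$, the kernel splits the line into the $k+1$ open intervals $(-\infty,a_1),(a_1,a_2),\ldots,(a_k,+\infty)$, and since $M\cap K=\emptyset$ every element of $M$ lies in exactly one of them. By the pigeonhole principle one of these intervals, say $(a_j,a_{j+1})$ (with the obvious convention for the two unbounded intervals), contains a subset $M'\subseteq M$ with $|M'|\geq\lceil m/(k+1)\rceil$.

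Now I claim $\cD_r(K,M')$ is a \emph{simple} daisy. Setting $K_0=\{a_1,\ldots,a_j\}$ and $K_1=\{a_{j+1},\ldots,a_k\}$, we have $K=K_0\cup K_1$, and because $M'$ lies entirely inside the single gap $(a_j,a_{j+1})$ the separation $\max(K_0)<\min(M')\leq\max(M')<\min(K_1)$ holds, which is precisely the defining condition of simplicity. Passing to any $\lceil m/(k+1)\rceil$-element subset of $M'$ (still contained in the same gap) yields a simple $(r,\lceil m/(k+1)\rceil,k)$-daisy with kernel $K$. Finally, since $M'\subseteq M$ forces $(M')^{(r)}\subseteq M^{(r)}$, we get $\cD_r(K,M')\subseteq\cD_r(K,M)$ as hypergraphs, so every $(k+r)$-edge of the sub-daisy receives the common color of $\cD_r(K,M)$; the sub-daisy is therefore monochromatic, proving the claim.

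The argument has no genuinely hard step: the content is entirely the pigeonhole observation that $k$ kernel elements leave $k+1$ gaps, one of which must absorb at least a $1/(k+1)$-fraction of the petals and thereby produce the kernel-separating configuration required for simplicity. The only points demanding care are bookkeeping ones: verifying that the chosen gap really yields the separation $K_0<M'<K_1$ in the boundary cases where $M'$ sits in an unbounded interval (so that one of $K_0,K_1$ is empty and its constraint is vacuous), and tracking the direction of the inequality so that the extremal simple-daisy-free coloring on $N-1$ vertices is correctly transferred into a $(r,m,k)$-daisy-free coloring.
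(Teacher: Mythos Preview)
Your proof is correct and follows essentially the same approach as the paper: both use the pigeonhole principle on the $k+1$ gaps determined by the kernel to find a subset $M'\subseteq M$ of size at least $\lceil m/(k+1)\rceil$ lying in a single gap, and observe that $\cD_r(K,M')$ is a monochromatic simple sub-daisy. Your write-up is in fact more careful than the paper's in spelling out the contrapositive framing and the boundary cases where $K_0$ or $K_1$ is empty.
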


\begin{proof}
Suppose that a $2$-coloring of $[n]^{(k+r)}$ contains a monochromatic copy of a $(r,m,k)$-daisy $\cD=\cD(K,M)$. Write $K=\{x_1,\ldots,x_k\}$ with $x_1<x_2<\ldots<x_k$ and let $x_0=0$ and $x_{k+1}=n+1$. For $0\leq i \leq k$, let $M_i$ be the vertices of $M$ between $x_i$ and $x_{i+1}$. By the pigeonhole principle there exists index $j$ such that $|M_j|\geq \left\lceil|M|/(k+1)\right\rceil \geq \left\lceil m/(k+1)\right\rceil$. Therefore, the induced subgraph $\cD[K\cup M_j]$ is a monochromatic simple $(r,\left\lceil m/(k+1) \right\rceil,k)$-daisy and hence $D_r(m,k;\ell)\geq D_r^{\smp}(\left\lceil m/(k+1)\right\rceil,k;\ell)$. 
\end{proof}

Note that a coloring without a monochromatic simple $(r,m)$-daisy does not contain in particular a simple $(r,m,k)$-daisy. Hence, $D_r^{\smp}(m;\ell)\leq D_r^{\smp}(m,k;\ell)$. 
As a quick corollary, Proposition \ref{prop:simple} and \ref{prop:pigeonhole} together with the observation above give Part (i) of Theorem \ref{thm:fixedkernel}.

\begin{corollary}
If $k,\ell,r \geq 2$, then
\begin{align*}
    D_r(m,k;\ell^r)\geq R_r(\lceil m/(k+1) \rceil;\ell).
\end{align*}
\end{corollary}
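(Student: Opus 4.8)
The plan is to concatenate Propositions~\ref{prop:pigeonhole} and~\ref{prop:simple}, applied with $\ell^r$ colours in place of $\ell$. Proposition~\ref{prop:pigeonhole} is stated for an arbitrary number of colours (its proof is just a pigeonhole argument on the kernel of a monochromatic daisy and uses nothing about the palette size), so taking $\ell^r$ colours gives at once
\[
D_r(m,k;\ell^r)\ \geq\ D_r^{\smp}\!\left(\left\lceil m/(k+1)\right\rceil,\,k;\,\ell^r\right).
\]
It therefore suffices to bound the right-hand side below by $R_r(\lceil m/(k+1)\rceil;\ell)$.

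For this I would use the proof of Proposition~\ref{prop:simple} essentially verbatim, observing that it already yields the kernel-size-$k$ statement $D_r^{\smp}(m',k;\ell^r)\geq R_r(m';\ell)$ for every $m'$. Indeed, starting from an $\ell$-colouring $\phi:[n]^{(r)}\to\{0,\dots,\ell-1\}$ with no monochromatic $K_{m'}^{(r)}$ (available whenever $n<R_r(m';\ell)$), the construction there produces an $\ell^r$-colouring $\psi$ of the subsets of $[n]$; its restriction to $[n]^{(k+r)}$ is a legitimate $\ell^r$-colouring for the $D_r^{\smp}(\,\cdot\,,k;\,\cdot\,)$ problem. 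Any monochromatic simple $(r,m',k)$-daisy with respect to this restriction involves only $(k+r)$-sets, hence is also monochromatic with respect to $\psi$ itself, so the block-comparison argument of Proposition~\ref{prop:simple} applies and forces $M^{(r)}$ to be monochromatic under $\phi$, where $M$ is the petal universe and $|M|=m'$ --- contradicting the choice of $\phi$. Thus no such daisy exists, i.e.\ $D_r^{\smp}(m',k;\ell^r)\geq R_r(m';\ell)$.

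Plugging $m'=\lceil m/(k+1)\rceil$ into this inequality and combining with the display above gives $D_r(m,k;\ell^r)\geq R_r(\lceil m/(k+1)\rceil;\ell)$, which is the assertion.

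There is no real obstacle: this is a two-step corollary and both steps are already contained in the preceding propositions. The only point deserving a moment's attention is the bookkeeping in the middle paragraph --- checking that the colouring built in Proposition~\ref{prop:simple} does its work using only the values on $(k+r)$-sets, and that the simple daisy it rules out can be taken with a kernel of exactly the size $k$ demanded by $D_r^{\smp}(\,\cdot\,,k;\,\cdot\,)$. Once that is spelled out, the chain of inequalities closes immediately.
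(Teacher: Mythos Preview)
Your proposal is correct and matches the paper's approach exactly: the paper presents this as an immediate corollary of Propositions~\ref{prop:simple} and~\ref{prop:pigeonhole} without further argument, and you have simply spelled out the chain $D_r(m,k;\ell^r)\geq D_r^{\smp}(\lceil m/(k+1)\rceil,k;\ell^r)\geq R_r(\lceil m/(k+1)\rceil;\ell)$, including the observation that the construction in Proposition~\ref{prop:simple} restricts to $(k+r)$-sets and hence delivers the fixed-kernel version as well.
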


We were also able to prove a lower bound without increasing the numbers of colors (part (ii) of Theorem \ref{thm:fixedkernel}), but at the expense of reducing other parameters.

\begin{proposition}\label{prop:fox}
If $\ell \geq 2$ and $r>k\geq 2$, then
\[
D_r(m,k;\ell)\geq R_{r-k}(\lceil m/(k+1) \rceil - k;\ell)
\]
\end{proposition}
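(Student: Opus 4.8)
The plan is to deduce this from Proposition~\ref{prop:pigeonhole} together with a lower bound for the Ramsey number of \emph{simple} daisies of a fixed kernel size. Writing $m'=\lceil m/(k+1)\rceil$, Proposition~\ref{prop:pigeonhole} already gives $D_r(m,k;\ell)\ge D_r^{\smp}(m',k;\ell)$, so it suffices to prove
\[
D_r^{\smp}(m',k;\ell)\ge R_{r-k}(m'-k;\ell).
\]
If $m'<r$ there is no $(r,m',k)$-daisy at all and the inequality holds trivially, so I may assume $m'\ge r$.

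For the displayed inequality I would start from an $\ell$-colouring $\phi$ of $[N]^{(r-k)}$, where $N=R_{r-k}(m'-k;\ell)-1$, having no monochromatic clique on $m'-k$ vertices, and manufacture from it an $\ell$-colouring $\psi$ of $[N]^{(k+r)}$ with no monochromatic simple $(r,m',k)$-daisy. The device is to read a canonical $(r-k)$-subset off each $(k+r)$-set by position: for $X=\{x_1<x_2<\dots<x_{k+r}\}$ put $\psi(X)=\phi(\{x_{k+1},x_{k+2},\dots,x_{r}\})$. This makes sense precisely because $r>k$, so that the index window $\{k+1,\dots,r\}$ is nonempty, and it has $r-k$ entries, as required.

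The heart of the argument is a position count. In a simple daisy $\cD(K_0\cup K_1,M)$ with $|K_0|=a$ and $|K_1|=k-a$, each edge $X_P=K_0\cup P\cup K_1$ lists $K_0$ in its first $a$ positions and $K_1$ in its last $k-a$ positions, so positions $k+1,\dots,r$ of $X_P$ always lie inside the petal block and pick out precisely the petal $P$ with its $k-a$ smallest and $a$ largest elements deleted. Hence if the daisy is $\psi$-monochromatic of colour $c$ and $M=\{y_1<\dots<y_{m'}\}$, then every $(r-k)$-subset $Q$ of the $(m'-k)$-element set $M'=\{y_{k-a+1},\dots,y_{m'-a}\}$ can be completed to a genuine petal $P\in M^{(r)}$ by prepending the $k-a$ smallest elements of $M$ and appending its $a$ largest ones; this $P$ respects $K_0<M<K_1$, and by construction $\phi(Q)=\psi(X_P)=c$. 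Thus $M'$ would be a monochromatic clique on $m'-k$ vertices for $\phi$, contradicting the choice of $\phi$. Combining with Proposition~\ref{prop:pigeonhole} then yields the stated bound.

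The one point requiring care is the index bookkeeping, and I expect it to be the only real obstacle: one must verify, uniformly over all admissible splits $k=a+(k-a)$ of a simple daisy's kernel (including the degenerate cases $a=0$ and $a=k$), both that positions $k+1,\dots,r$ of every edge genuinely fall inside the petal block — which is exactly where $r>k$ is used — and that the completion of $Q$ to a full petal preserves the order relations $K_0<M<K_1$. The remaining cardinality checks, such as $|M'|=m'-k$ and $|P|=r$, are routine.
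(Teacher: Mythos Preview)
Your proposal is correct and follows essentially the same route as the paper: reduce to simple daisies via Proposition~\ref{prop:pigeonhole}, then lift an extremal $(r-k)$-uniform colouring $\phi$ to a $(k+r)$-uniform colouring by $\psi(\{x_1<\dots<x_{k+r}\})=\phi(\{x_{k+1},\dots,x_r\})$, and argue that a monochromatic simple daisy would force a monochromatic $(r-k)$-clique of size $m'-k$ inside the middle block of $M$. Your position-count justification and the padding of $Q$ by the $k-a$ smallest and $a$ largest elements of $M$ are exactly the paper's construction, only spelled out in slightly more detail.
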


\begin{proof}
We will prove that $D_r^{\smp}(m,k;\ell)\geq R_{r-k}(m-k;\ell)$. The inequality in the statement follows from Proposition \ref{prop:pigeonhole}. Let $n=R_{r-k}(m-k;\ell)-1$ and let $\phi: [n]^{(r-k)}\to [\ell]$ be an $\ell$-coloring of the $(r-k)$-tuples of $[n]$ with no monochromatic copy of a $K_{m-k}^{(r-k)}$. We define a coloring $\psi: [n]^{(k+r)}\to [\ell]$ by
\[
\psi(\{x_1,\ldots,x_{k+r}\})=\phi(\{x_{k+1},\ldots,x_r\}),
\]
for $\{x_1,\ldots,x_{k+r}\}\in [n]^{(k+r)}$.

Suppose by contradiction that there exists a monochromatic simple daisy $\cD=\cD(K_0\cup K_1, M)$ with respect to $\psi$. Write $|K_0|=k_0$, $|K_1|=k_1$ and $M=\{x_1,\ldots,x_m\}$. We claim that the set $M'=\{x_{k-k_0+1},\ldots,x_{m-k+k_1}\}$ is monochromatic with respect to $\phi$. Indeed, if $A, A' \in M'^{(k-r)}$, then the sets
\begin{align*}
X=K_0\cup\{x_1,\ldots,x_{k-k_0}\}\cup A\cup \{x_{m-k+k_1+1},\ldots,x_m\}\cup K_1
\end{align*}
and
\begin{align*}
X'=K_0\cup\{x_1,\ldots,x_{k-k_0}\}\cup A'\cup \{x_{m-k+k_1+1},\ldots,x_m\}\cup K_1
\end{align*}
are edges of $\cD$, because $\{x_1,\ldots,x_{k-k_0}\}\cup A\cup \{x_{m-k+k_1+1}$ and $\{x_1,\ldots,x_{k-k_0}\}\cup A'\cup \{x_{m-k+k_1+1},\ldots,x_m\}$ are sets of size $(k-k_0)+(k-r)+(k-k_1)=r$ in $M$. Since $\cD$ is monochromatic, we obtain that $\phi(A)=\psi(X)=\psi(X')=\phi(A')$ and consequently $M'$ is set of size $m-k$ monochromatic with respect to $\phi$, which is a contradiction.
\end{proof}

\section{Concluding remarks}\label{sec:remarks}

\subsection{Better bounds on the Ramsey number of daisies with unrestricted kernels.} Likely, the most interesting problem is to improve the bounds on $D_r(m;\ell)$. Proposition \ref{prop:randomlb} gives us only an exponential lower bound. On the other hand, we were unable to rule out even that $D_r(m;\ell)=R_r(m;\ell)$ for sufficiently large $m$. While we do not believe that this is the case, we also believe that our lower bound is far from being the best possible. Thus raising the following question.

\begin{problem}
Improve the lower bound on $D_r(m)$. In particular, is it true that for every $r\geq 2$, there is $m_0=m_0(r)$ and $s=s(r)$ such that $D_r(m)$ grows like a tower of exponentials $t_s(m)$, where $s\to \infty$ as $r\to \infty$ and $m\geq m_0$?
\end{problem}

The following weaker question is already interesting.

\begin{problem}
    Is $D_r(m)>2^{2^{\epsilon m}}$ for some $r$ and $\epsilon>0$?
\end{problem}

\subsection{Lower bounds for daisies with fixed kernel size}

In Section \ref{sec:fixed} we considered the problem of determining $D_r(m,k;\ell)$, the Ramsey number of daisies of kernel of size $k$. We proved that it is lower bounded by $R_r(\lfloor m/(k+1) \rceil; p)$ if $\ell\geq p^r$. Clearly, this bound works only if $m$ is sufficiently large
with respect to $k$ and $r$. In the forthcoming paper~\cite{MS}
the junior author shows a bound for 2 colors. His proof uses a more involved
modification of the Stepping-up Lemma. 


\begin{theorem}[\cite{MS}]\label{th:fixdaisy}
Given integers $r \geq 3$ and $k\geq 0$, there exists a positive absolute constant $c>0$ and an integer $m_0=m_0(r,k)$ such that
\[
D_r(m,k;2)\geq t_{r-2}(ck^{-3}m^{1/2^{r-4}})
\]
holds for every $m\geq m_0$.
\end{theorem}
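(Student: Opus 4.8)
The plan is to prove Theorem~\ref{th:fixdaisy} by an iterated stepping‑up construction, modelled on the Erd\H{o}s--Hajnal argument behind the lower bound $R_r(m)\geq t_{r-2}(cm^2)$, but dragging a kernel of fixed size $k$ through every step. Concretely I would induct on $r\geq 3$, proving the equivalent statement that there is a $2$‑colouring of $[N_r]^{(k+r)}$ with no monochromatic $(r,m,k)$‑daisy, where $N_r=t_{r-2}(ck^{-3}m^{1/2^{r-4}})$ and $m\geq m_0(r,k)$; by the remarks opening Section~\ref{sec:fixed} this is exactly what must be produced.

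For the base case $r=3$ I would take a uniformly random $2$‑colouring of $[N_3]^{(k+3)}$ and union‑bound over all $(3,m,k)$‑daisies, exactly as in the proof of Proposition~\ref{prop:computlb}: a fixed such daisy is monochromatic with probability $2^{1-\binom m3}$ and there are at most $N_3^{\,k+m}$ of them, so a good colouring exists as soon as $(k+m)\log_2 N_3<\binom m3$. This holds with $N_3=2^{ck^{-3}m^2}=t_1(ck^{-3}m^2)$ once $m\geq m_0(3,k)$ (it suffices that $m\geq k$) and $c$ is a small absolute constant, and $m^2=m^{1/2^{3-4}}$ as required.

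For the stepping‑up step, assume a $2$‑colouring $\chi$ of $[N]^{(k+r)}$ with no monochromatic $(r,m,k)$‑daisy. Identify $[2^N]$ with binary strings of length $N$ ordered as integers and, for $a<b$, let $\delta(a,b)$ be the position of the most significant bit in which $a,b$ differ, so that $\delta(a,c)=\max(\delta(a,b),\delta(b,c))$ and $\delta(a,b)\neq\delta(b,c)$ whenever $a<b<c$. I would define $\chi'$ on a $(k+r+1)$‑set $S=\{a_1<\dots<a_{k+r+1}\}$ by examining $\delta_i=\delta(a_i,a_{i+1})$: if $(\delta_i)_{i\in[k+r]}$ is monotone (no interior local maximum or minimum, so the $\delta_i$ are distinct) set $\chi'(S)=\chi(\{\delta_1,\dots,\delta_{k+r}\})$, and otherwise set $\chi'(S)$ from the parity of the first interior local extremum of $(\delta_i)$, assigning opposite parities to the two types of extremum, exactly as in the classical Stepping‑up Lemma. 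Now suppose $\chi'$ had a monochromatic $(r+1,m',k)$‑daisy $\cD(K,M)$. The elements of $K$ split $M$ into at most $k+1$ blocks, so one block $M_0$ has $|M_0|\geq m'/(k+1)$ and $\cD(K,M_0)$ is a monochromatic \emph{simple} sub‑daisy; for petals $P\subseteq M_0$ the way $P$ interleaves with $K$ is now fixed, so the $\delta$‑sequence of $K\cup P$ is a fixed prefix (from the part of $K$ below $M_0$), the internal differences of $P$ together with the two boundary differences at $\min P,\max P$, and a fixed suffix (from the part of $K$ above $M_0$). An Erd\H{o}s--Szekeres argument passes to a sub‑universe $M_0'\subseteq M_0$ with $|M_0'|=\Omega(\sqrt{|M_0|})$ on which the consecutive $\delta$‑values of $M_0'$ are monotone; on $M_0'$ the $\delta$‑calculus shows that $\chi'(K\cup P)$ equals $\chi$ applied to a $(k+r)$‑set of the shape $K^{*}\cup\{\text{the }\delta\text{-values of }P\}$ for a \emph{fixed} $k$‑set $K^{*}$, so the images form a monochromatic $(r,m'',k)$‑daisy in $[N]$ with $m''=\Omega(\sqrt{m'/(k+1)})$. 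Choosing $m'=C(k+1)(m+r)^2$ forces $m''\geq m$, contradicting the inductive hypothesis. Since $N_{r+1}=2^{N_r}$ raises the tower height by one, and the per‑step losses (a factor $k+1$ from the pigeonhole and a squaring of the daisy parameter from Erd\H{o}s--Szekeres) telescope over the $r-3$ steps into the argument $ck^{-3}m^{1/2^{r-4}}$ with $c$ an absolute constant, the induction closes; one also updates $m_0(r,k)$ so that $m_0(3,k)$'s hypothesis survives the telescoping.

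The main obstacle I expect is designing and analysing $\chi'$ so that the pullback genuinely returns a daisy whose kernel has size \emph{exactly} $k$. Unlike in the clique case, the interior extremum of the $\delta$‑sequence of $K\cup P$ may lie inside the fixed kernel‑prefix, inside the petal part, inside the fixed kernel‑suffix, or at one of the two kernel--petal junctions, and each possibility changes which fixed $k$‑set $K^{*}$ is extracted, whether $\chi'(K\cup P)$ factors cleanly through $\chi$, and whether the two boundary differences $\delta(\cdot,\min P)$ and $\delta(\max P,\cdot)$ are actually constant over $P$. One therefore has to argue that a family of petals all receiving the same $\chi'$‑colour is forced, on a large sub‑universe, into a single such regime with monotone internal $\delta$‑behaviour and frozen boundary values; this is precisely where the Erd\H{o}s--Szekeres step and the $k{+}1$‑fold pigeonhole are spent, and it is why the construction is a genuinely ``more involved modification'' of the classical Stepping‑up Lemma; see \cite{MS} for the details.
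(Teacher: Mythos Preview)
The present paper does not contain a proof of Theorem~\ref{th:fixdaisy}: the result is only stated here, with the proof deferred to the forthcoming paper~\cite{MS} and the sole remark that it ``uses a more involved modification of the Stepping-up Lemma.'' There is therefore no proof in this paper against which your attempt can be compared.

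That said, your outline is consistent with what the paper announces: induction on $r$, a probabilistic base case at $r=3$, and a stepping-up step via the $\delta$-function, with a pigeonhole reduction to simple daisies and an Erd\H{o}s--Szekeres pass to force monotone $\delta$-values on the petal universe. Your accounting of the losses (a factor $k{+}1$ from the pigeonhole and a square root from Erd\H{o}s--Szekeres at each of the $r-3$ steps) telescopes to the correct shape $t_{r-2}\bigl(c\,k^{-O(1)}m^{1/2^{r-4}}\bigr)$, matching the statement up to the precise power of~$k$. You also correctly isolate the genuine difficulty: one must show that on a large sub-universe the two boundary values $\delta(\max K_0,\min P)$ and $\delta(\max P,\min K_1)$ freeze, that the first local extremum of the $\delta$-sequence of $K\cup P$ cannot persist in the ``parity'' regime of $\chi'$, and that the pulled-back kernel $K^{*}$ lands with size exactly~$k$ rather than $k-2$ or $k+2$. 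Your write-up does not actually carry out any of these steps---you explicitly defer them to~\cite{MS}---so what you have is a plausible plan with the substantive part left open, not a proof.
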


The bound is essentially of the same type as known bounds for
$R_r(m)$, but also in this theorem the size of the kernel $k$ must be
substantially smaller than $m$. So the natural next step toward proving
better bounds on the Ramsey numbers of daisies with unrestricted
kernel size is to prove a similar lower bound with $k>m$.

\subsection{Estimates for monochromatic directed paths}

Finally, we state a question arising in relation with the function $f(t,\ell)$ introduced in Section \ref{sec:ramseyvsdaisy}. Recall that we define $f(t,\ell)$ as the largest integer $m$ with the property that for any directed graph $D$ with $\chi(D)=m$, there exists an $\ell$-coloring of its arcs with no monochromatic directed path of size $t$. As shown, the function $f(t,\ell)$ can be used to determine lower bounds for Ramsey numbers. This leads to the natural problem of determining $f(t,\ell)$.

Proposition \ref{prop:fact2} shows that $f(t,\ell)\geq (t-1)^{\lfloor\ell/2\rfloor}$. For $\ell=2$, it is not hard to check that the result is actually tight and $f(t,2)=t-1$. However, the lower bound provided by the proposition does not seem optimal. Indeed, one can prove by a blow-up iterated construction that $f(t,\ell)=\Omega(t^{\log_2 \ell})$. Since $\log_2 3\approx1.584..>1$, this construction gives better bounds for $\ell=3$, but fails to give good bounds for large values of $\ell$. It would be interesting to know if it is possible to improve the bounds on $f(t,\ell)$ for odd $\ell$ in general.

\subsection{Ramsey number of $\cG*\cH$}

In \cite{BLM}, the authors proposed the following operation with hypergraphs. Given an $r$-graph $\cG$ and an $s$-graph $\cH$, we construct the $(r+s)$-graph $\cG*\cH$ with vertex set the disjoint union of the vertex sets of $\cG$ and $\cH$ and whose edges are all sets of the form $X\cup Y$ with $X\in \cG$ and $Y\in \cH$. Observe that if $\cG=K_k^{(k)}$ (a $k$-edge) and $\cH=K_m^{(r)}$, then $\cG*\cH$ is just an $(r,m,k)$-daisy. Similarly to \cite{BLM} one can ask the very general question. Given an $r$-graph $\cG$, let $R(\cG;\ell)$ be the minimum number $n$ such that for every coloring of $[n]^{(v(\cG))}$ by $\ell$ colors, there exists a monochromatic copy of $\cG$.

\begin{problem}
Let $\cG$ be an $r$-graph and $\cH$ be an $s$-graph. How does $R(\cG*\cH;\ell)$ compare to $R(\cG;\ell)$ and $R(\cH;\ell)$?
\end{problem}

Perhaps a more concrete question could be the following. Let $\cG^t$ denote the product $G*\ldots*G$ $t$ times.

\begin{problem}
What can one say about the growth of $R((K_3^{(2)})^t;\ell)$ as a function of $t$ and $\ell$? 
\end{problem}

\section*{Acknowledgments}
The authors thank Jacob Fox for fruitful discussions and the anonymous referee for helpful comments on the paper.

\bibliography{literature}

\end{document}